\date{} 
\begin{document}

\centerline{\bf Existence and uniqueness of functional
differential equations with n delay } 

\centerline{} 

\centerline{\bf {Bahloul Rachid}} 
\centerline{Faculty of Sciences and Technology, Fez, Morocco} 
\centerline{bahloulr33@hotmail.com}

\centerline{}

\newtheorem{Theorem}{\quad Theorem}[section] 

\newtheorem{Definition}[Theorem]{\quad Definition} 

\newtheorem{Proposition}[Theorem]{\quad Proposition} 

\newtheorem{Corollary}[Theorem]{\quad Corollary} 

\newtheorem{Notation}[Theorem]{\quad Notation} 

\newtheorem{Lemma}[Theorem]{\quad Lemma} 

\newtheorem{Remark}[Theorem]{\quad Remark} 

\newtheorem{Example}[Theorem]{\quad Example} 

\centerline{}

\begin{abstract} In this paper we give a necessary and suffcient conditions for
the existence and uniqueness of periodic solutions of functional differential
equations with n delay $\frac{d}{dt}x(t) = Ax(t) + \sum_{j=1}^{n}Bx(t-r_{j}) + f(t)$. The conditions
are obtained in terms of R-boundedness of operator valued Fourier multipliers.
\end{abstract} 

{\bf Mathematics Subject Classification:} xxxxx \\

{\bf Keywords:} functional differential equations with n delay, R-bounded.

\section{Introduction} Let A and B be two closed linear operators defined on a Banach space X
with domains D(A) and D(B), respectively such that $D(A) \subset D(B)$.
In this paper we show existence and uniqueness of solutions for the following
differential equation with n delay

\begin{eqnarray}\label{e}
\left\{
\begin{array}{ccccc}
\displaystyle{\frac{d}{dt}x(t) = Ax(t) + \sum_{j=1}^{n}Bx(t-r_{j}) + f(t)}\\\\
x(0) = x(2\pi).
\end{array}
\right.
\end{eqnarray}
where $f \in L^{p}([-r_{2\pi} ,0], X)$ for some $1 \leq p < \infty, r_{2\pi} = 2\pi N$ ($N \in \mathbb{N}$) and we suppose B is bounded.
The theory of operator-valued Fourier multipliers has attracted the attention of many papers in recent years. For example, this theory was used in \cite{1}
to obtain results about equations $\frac{dx(t)}{dt} = Ax(t) + f(t)$, and in \cite{11} to obtain
results about delay equation $\frac{dx(t)}{dt} = Ax(t) + F(x_{t}) + f(t)$.
In \cite{6}, S.Bu studied $L^{p}$-Maximal Regularity of Degenerate delay Equations with Periodic Conditions.
We note that in the special case when $B = 0$, maximal regularity of Eq. (\ref{e}) has been studied by Arendt and Bu in $L^{p}$-spaces case and Besov spaces case [\cite{1},
\cite{2}], Bu and Kim in TriebelLizorkin spaces case \cite{8}. The corresponding integro-differential equations were treated by Keyantuo and Lizama [\cite{17}, \cite{18}], Bu and
Fang \cite{7}.
In this paper, we characterize the existence and uniqueness for the n delay
equation (\ref{e}) under the condition that X is a UMD space. Here the operator
A is not necessarily the generator of a $C_{0}$-semigroup. We use the operator valued multiplier Fourier method.
The organisation of this work is as follows : In section 2, we present preliminary results on UMD spaces and $L^{P}$-multiplier. In section 3, we study the
existence of periodic strong solution for Eq.(\ref{e}) with finite delay. In section
4, we give the main abstract result ( theorem [4.2] ) of this work.\\
1) for every $f \in L^{p}(\mathbb{T}; X); 1 < p < \infty$, there exists a unique $2\pi$-periodic strong $L^{p}$-solution of Eq. (\ref{e}).\\
2) $(ikI - A - \sum_{j=1}^{n}B_{j,k})$ has bounded invertible for all $k \in \mathbb{Z}$ and $\{ik(ikI - A - \sum_{j=1}^{n}B_{j,k})^{-1}\}_{k \in \mathbb{Z}}$ is R-bounded.

\section{Preliminary Notes} 
Let $X$ be a Banach Space. Firstly, we denote By $\mathbb{T}$ the group defined as the quotient $\mathbb{R}/2 \pi \mathbb{Z}$. There is an identification between functions on $\mathbb{T}$ and $2\pi$-periodic functions on $\mathbb{R}$. We consider the interval $[0, 2\pi$) as a model for $\mathbb{T}$.

\begin{Definition}
A Banach space X is said to be UMD space if the Hilbert transform is bounded
on $L^{p}(\mathbb{R}, X)$ for all $1<p<\infty$.
\end{Definition}

\begin{Example}: \cite{9}\\
1.Any Hilbert space is an UMD space.\\
2. $L^{p} (0.1)$ are UMD spaces for every $1 < p < \infty$.\\
3. Any closed subspace of a UMD space is a UMD space.
\end{Example}

\begin{Definition} \cite{1}\\
A family of operators $T=(T_{j})_{j \in \mathbb{N}^{\ast}}\subset B(X,Y)$ is called $R$-bounded (\textbf{ Rademacher bounded or randomized bounded}), if there is a constant $C > 0$ and
$p \in [1, \infty)$ such that for each $n \in N, T_{j} \in $T$, x_{j}\in X$ and for all independent, symmetric, $\left\{-1,1\right\}$-valued random variables $r_{j}$ on a probability space ($\Omega, M, \mu$) the inequality
$$\left\|\sum_{j=1}^{n} r_{j} T_{j} x_{j}\right\|_{L^{p}(0,1;Y)}\leq C \left\|\sum_{j=1}^{n} r_{j} x_{j}\right\|_{L^{p}(0,1;X)}$$
is valid. The smallest $C$ is called $R$-bounded of  $(T_{j})_{j \in \mathbb{N}^{\ast}}$ and it is denoted by $R_{p}$($T$).
\end{Definition}

\begin{Definition} \cite{11}\\
For $1\leq p < \infty$ , a sequence $\left\{M_{k}\right\}_{k \in \mathbb{Z}} \subset \mathbf{B}(X,Y)$ is said to be an $L^{p}$-multiplier if for each $f \in L^{p}(\mathbb{T}, X)$, there exists $ u\in$ $L^{p}(\mathbb{T}, Y)$ such that $\hat{u}(k) = M_{k}\hat{f}(k)$ for all $k \in \mathbb{Z}$.
\end{Definition}

\begin{Proposition} \cite[Proposition 1.11]{1}
Let $X$ be a Banach space and  $\left\{M_{k}\right\}_{k \in \mathbb{Z}}$ be an $L^{p}$-multiplier, where $1 \leq p < \infty$. Then the set $\left\{M_{k}\right\}_{k \in \mathbb{Z}}$ is $R$-bounded.
\end{Proposition}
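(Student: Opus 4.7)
The plan is to realize the multiplier sequence as a bounded linear operator on $L^{p}$ and then extract $R$-boundedness from its operator norm by a Rademacher randomization combined with Kahane's contraction principle.

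First I would promote $\{M_{k}\}_{k\in\mathbb{Z}}$ to a bounded linear operator $T\colon L^{p}(\mathbb{T},X)\to L^{p}(\mathbb{T},Y)$ by $Tf=u$, where $u$ is the unique function with $\hat{u}(k)=M_{k}\hat{f}(k)$ provided by the $L^{p}$-multiplier hypothesis. Uniqueness of Fourier coefficients makes $T$ well defined and linear, and a closed graph argument makes it bounded: if $f_{n}\to f$ and $Tf_{n}\to g$ in $L^{p}$, continuity of the $k$-th Fourier coefficient functional on $L^{p}$ forces $\hat g(k)=M_{k}\hat f(k)$, hence $g=Tf$.

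Next, given distinct integers $k_{1},\dots,k_{n}$, vectors $x_{1},\dots,x_{n}\in X$, and independent Rademacher variables $(r_{j})$ on $[0,1]$, I would test $T$ on the trigonometric function
\[
f(s,t)=\sum_{j=1}^{n}r_{j}(s)\,e^{ik_{j}t}\,x_{j},
\]
which for each fixed $s$ is a polynomial in $t$, so that $(Tf)(s,t)=\sum_{j}r_{j}(s)e^{ik_{j}t}M_{k_{j}}x_{j}$. Applying $\|Tf(s,\cdot)\|_{p}\le \|T\|\,\|f(s,\cdot)\|_{p}$ pointwise in $s$, raising to the $p$-th power, integrating in $s$, and invoking Fubini gives
\[
\int_{0}^{2\pi}\!\!\int_{0}^{1}\Bigl\|\sum_{j}r_{j}(s)e^{ik_{j}t}M_{k_{j}}x_{j}\Bigr\|_{Y}^{p}ds\,dt\;\le\;\|T\|^{p}\!\int_{0}^{2\pi}\!\!\int_{0}^{1}\Bigl\|\sum_{j}r_{j}(s)e^{ik_{j}t}x_{j}\Bigr\|_{X}^{p}ds\,dt.
\]

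Finally, since $|e^{ik_{j}t}|=1$, Kahane's contraction principle for complex scalars (applied to the inner $s$-integral, for each fixed $t$) compares the sum with modulus-one phases to the same sum without them, at the cost of a factor $2$ per application. Using this to remove the $e^{ik_{j}t}$ factors on the left (lower side of the phased sum) and on the right (upper side of the phased sum), the $t$-integral collapses to a factor $2\pi$ on each side and cancels, yielding
\[
\Bigl\|\sum_{j}r_{j}M_{k_{j}}x_{j}\Bigr\|_{L^{p}(0,1;Y)}\le 4\|T\|\,\Bigl\|\sum_{j}r_{j}x_{j}\Bigr\|_{L^{p}(0,1;X)},
\]
which is precisely $R$-boundedness of $\{M_{k}\}_{k\in\mathbb{Z}}$ with $R_{p}\le 4\|T\|$. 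The main delicate point is the clean accounting with Kahane's contraction principle for complex coefficients of modulus one; the closed graph step, Fubini, and Fourier uniqueness are entirely routine.
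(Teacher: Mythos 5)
Your argument is correct and is exactly the standard proof of this fact (it is Proposition 1.11 of Arendt--Bu, which the paper cites without reproducing a proof): realize the multiplier as a bounded operator $T$ via closed graph, test on $\sum_j r_j(s)e^{ik_jt}x_j$, and strip the unimodular phases with the contraction principle, giving $R_p\le 4\|T\|$. Since the paper offers no proof of its own here, there is nothing to compare beyond noting that your route coincides with the one in the cited reference.
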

\begin{Theorem} \textbf{(Marcinkiewicz operator-valud multiplier Theorem)}.\\
Let $X$, $Y$ be UMD spaces and  $\left\{M_{k}\right\}_{k \in \mathbb{Z}} \subset B(X, Y)$. If the sets $\left\{M_{k}\right\}_{k \in \mathbb{Z}}$ and
$\left\{k(M_{k+1}-M_{k})\right\}_{k \in \mathbb{Z}}$ are \\$R$-bounded, then $\left\{M_{k}\right\}_{k \in \mathbb{Z}}$ is an $L^{p}$-multiplier for 
$ 1 < p < \infty $.
\end{Theorem}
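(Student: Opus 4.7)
The plan is to reduce the discrete periodic multiplier statement to the continuous operator-valued Mikhlin multiplier theorem of Weis on $\mathbb{R}$ and then transfer the result back to $\mathbb{T}$ via a de~Leeuw-type principle. The UMD hypothesis enters exactly once, through Weis's continuous theorem, where it is indispensable; the rest of the argument is structural manipulation of R-bounded families.

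First, I would construct a piecewise-linear interpolant $\widetilde{M} : \mathbb{R}\setminus\{0\} \to B(X,Y)$ of $\{M_k\}_{k\in\mathbb{Z}}$ by setting $\widetilde{M}(\xi) = (1-t)M_k + tM_{k+1}$ for $\xi = k+t$ with $t\in[0,1)$, and redefining $\widetilde{M}(\xi) = M_0$ for $|\xi|\leq 1$ to kill derivatives near zero. Every value of $\widetilde{M}$ then lies in the absolutely convex hull of $\{M_k\}_{k\in\mathbb{Z}}$, so the stability of R-boundedness under absolutely convex hulls gives R-boundedness of $\{\widetilde{M}(\xi)\}_{\xi\neq 0}$. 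On each interval $(k,k+1)$ one has $\widetilde{M}'(\xi) = M_{k+1}-M_k$, and since $|\xi|\leq |k|+1$ there, the family $\{\xi\widetilde{M}'(\xi)\}_{\xi\neq 0}$ is controlled by $\{k(M_{k+1}-M_k)\}_{k\in\mathbb{Z}}$ up to a uniform multiplicative constant, hence is R-bounded by hypothesis. A routine mollification step replaces $\widetilde M$ by a genuinely smooth $\widetilde M^\varepsilon$ while preserving both R-boundedness conditions uniformly in $\varepsilon$.

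Second, I would invoke Weis's operator-valued Mikhlin theorem for UMD spaces to conclude that $\widetilde M^\varepsilon$ is an $L^p(\mathbb{R};X)\to L^p(\mathbb{R};Y)$ Fourier multiplier with a bound uniform in $\varepsilon$, and then pass to the limit to obtain the same for $\widetilde M$. Finally, I would apply a de~Leeuw-type transference theorem to deduce that the sampling $\{\widetilde M(k)\}_{k\in\mathbb{Z}} = \{M_k\}_{k\in\mathbb{Z}}$ is an $L^p(\mathbb{T};X)$-multiplier for every $1<p<\infty$, which is the claim.

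The main obstacle I anticipate is the R-boundedness bookkeeping in the first step rather than the application of Weis's or de~Leeuw's results themselves. Uniform norm boundedness is easily preserved by convex combinations and by convolution with a smooth bump, but R-boundedness must be handled through the absolute convex hull invariance together with the hypothesis on $\{k(M_{k+1}-M_k)\}$, and the interplay between the discrete derivative differences $M_{k+1}-M_k$ and the continuous derivative $\widetilde M'(\xi)$ near $\xi=0$ needs to be normalised carefully so that the Mikhlin condition $\|\xi\widetilde M'(\xi)\|$ does not blow up.
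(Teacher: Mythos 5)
This theorem is stated in the paper without proof: it is the operator-valued Marcinkiewicz multiplier theorem of Arendt and Bu (reference [1], Theorem 1.3), quoted as a black box. So there is no in-paper argument to compare against; what you have written is, in outline, essentially the proof that exists in the literature (piecewise-affine extension of the symbol to $\mathbb{R}$, the continuous operator-valued Mikhlin/Marcinkiewicz theorem on UMD spaces, and a vector-valued de~Leeuw restriction to recover the periodic multiplier by sampling at the integers). The R-boundedness bookkeeping you describe is the right bookkeeping: values of the interpolant lie in the (absolutely) convex hull of $\{M_k\}$, and on $(k,k+1)$ one writes $\xi\widetilde M'(\xi)=\tfrac{\xi}{k}\cdot k(M_{k+1}-M_k)$ with $|\xi/k|\le 2$ and invokes the Kahane contraction principle.

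There is one concrete error you should repair. Redefining $\widetilde M(\xi)=M_0$ for $|\xi|\le 1$ destroys the sampling property $\widetilde M(\pm 1)=M_{\pm 1}$ (and creates jumps at $\xi=\pm1$), so the de~Leeuw step at the end would return $M_0$ in place of $M_{\pm1}$. The modification is also unnecessary: near $\xi=0$ the Mikhlin condition is the \emph{easy} case, since on $(-1,0)\cup(0,1)$ one has $|\xi|\le 1$ and $\widetilde M'(\xi)\in\{M_1-M_0,\,M_0-M_{-1}\}$, which is R-bounded already because differences of members of the R-bounded family $\{M_k\}$ are R-bounded; the hypothesis on $\{k(M_{k+1}-M_k)\}$ gives no information at $k=0$ and none is needed there. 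Keep the plain piecewise-affine interpolant on all of $\mathbb{R}$. Beyond that, be aware that your argument leans on two nontrivial imported results (Weis's theorem, which in its usual form asks for $C^1$ symbols and hence forces either your mollification-and-limit step or a version of the theorem for symbols of R-bounded variation, and the operator-valued de~Leeuw theorem); both are available in the vector-valued setting, but a self-contained write-up would have to state and justify them precisely, including the passage to the limit $\varepsilon\to 0$ in the multiplier norm via testing on functions whose Fourier transform is smooth and compactly supported.
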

We observe that the condition of R-boundedness for $(M_{k})_{k \in \mathbb{Z}}$ is necessary.

\begin{Remark}\label{r1} \cite{13}\\
Let $f \in L^{1}(\mathbb{T};X)$. If $g(t) = \int_{0}^{t}f(s)ds$ and $k \in \mathbb{Z}, k \neq 0,$ then 
\begin{center}
$\hat{g}(k) = \frac{i}{k}\hat{f}(0) - \frac{i}{k}\hat{f}(k)$
\end{center}
\end{Remark}
\section{A criterion for periodic solutions}
\begin{Notation}
Let $k \in \mathbb{Z}$. Denote by $B_{j, k} := e^{-ikr_{j}}B$, \\$\Delta_{k} = (ikI - A - \sum_{j=1}^{n}B_{j,k})$ and
$\sigma_{\mathbb{Z}}(\Delta):=\{ k \in \mathbb{Z}: \Delta_{k} \text{is not bijective} \}$\\
$H^{1,p}(\mathbb{T}; X) = \left\{ u \in L^{p}(\mathbb{T}, X)   : \exists  v \in  L^{p}(\mathbb{T}, X) , \hat{v}(k) = ik \hat{u}(k) for\   \   all\  \  k \in \mathbb{Z}       \right\}$
\end{Notation}

\begin{Definition}
Let $f \in L^{p}(\mathbb{T}; X)$. A function $x \in H^{1,p}(\mathbb{T}; X)$ is said to be a $2\pi$-periodic strong $L^{p}$-solution of 
Eq. (\ref{e}) if $x(t) \in  D(A)$ for all $t \geq 0$ and Eq. (\ref{e}) holds almost every where.
\end{Definition}

\begin{Lemma} \cite[Lemme  2.1]{1}
Let   $1 \leq p < \infty$ and $u,v \in L^{p}(\mathbb{T}; X)$. Then the following assertions are equivalent:\\
(i) $\displaystyle{ \int_{0}^{2\pi}v(s)ds = 0}$ and there exists $x \in X$ such that  $u(t) = x + \int_{0}^{t}v(s)ds.$\\
(ii) $\hat{v}(k) = ik\hat{u}(k)$   \ for any \ $k \in \mathbb{Z}$.
\end{Lemma}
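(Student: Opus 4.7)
The plan is to prove the two implications separately, using Remark \ref{r1} as the main computational tool together with the injectivity of the Fourier transform on $L^p(\mathbb{T};X)$.

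For the implication (i) $\Rightarrow$ (ii), I would start from the representation $u(t) = x + \int_0^t v(s)\,ds$ and compute $\hat{u}(k)$ for each $k \in \mathbb{Z}$. At $k=0$, the constant function $x$ contributes $x$ to $\hat{u}(0)$, while the Fourier coefficient of the integral vanishes because $\int_0^{2\pi}v(s)\,ds = 0$ by hypothesis; either way $ik\hat{u}(k) = 0 = \hat{v}(0)$. For $k \neq 0$ the constant part contributes nothing, and Remark \ref{r1} combined with $\hat{v}(0) = 0$ gives $\hat{u}(k) = -\tfrac{i}{k}\hat{v}(k)$, i.e. $ik\hat{u}(k) = \hat{v}(k)$.

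For the converse (ii) $\Rightarrow$ (i), I would first specialize the identity $\hat{v}(k) = ik\hat{u}(k)$ to $k=0$ to conclude $\int_0^{2\pi} v(s)\,ds = 0$. Then set $g(t) := \int_0^t v(s)\,ds$; this vanishing integral guarantees $g(2\pi) = g(0)$, so $g$ descends to an element of $L^p(\mathbb{T};X)$. Applying Remark \ref{r1} to $g$ and using $\hat{v}(0)=0$ shows $\hat{g}(k) = -\tfrac{i}{k}\hat{v}(k) = \hat{u}(k)$ for every $k \neq 0$. Hence $\widehat{u-g}(k) = 0$ for all $k \neq 0$, so by uniqueness of Fourier coefficients in $L^p(\mathbb{T};X)$ the function $u-g$ is a.e. equal to a constant $x \in X$, which is exactly the desired representation.

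The only nontrivial step is the last one, where one must invoke the injectivity of the Fourier coefficient map on $L^p(\mathbb{T};X)$ to conclude that a periodic $X$-valued function whose Fourier coefficients vanish away from zero must be constant. Apart from this, the argument is a direct bookkeeping calculation built entirely on Remark \ref{r1}, so I do not expect any real obstacle.
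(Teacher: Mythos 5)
Your argument is correct, and it is the standard proof of this lemma, which the paper itself does not prove but merely cites from Arendt--Bu [1, Lemma 2.1]: both directions reduce, exactly as you do it, to Remark \ref{r1} together with the injectivity of the Fourier coefficient map on $L^{p}(\mathbb{T};X)$ (available via Fej\'er's theorem in the vector-valued setting). The only slip is the claim in (i)$\Rightarrow$(ii) that the zeroth Fourier coefficient of $g(t)=\int_{0}^{t}v(s)\,ds$ vanishes because $\int_{0}^{2\pi}v(s)\,ds=0$ --- in general $\hat{g}(0)$ need not be zero --- but as you yourself observe this is immaterial, since assertion (ii) at $k=0$ only requires $\hat{v}(0)=0$, which follows directly from the hypothesis.
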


\begin{Definition} 
For $1 \leq p < \infty$, we say that a sequence $\left\{M_{k}\right\}_{k \in \mathbb{Z}} \subset \mathbf{B}(X, Y)$ is an ($L^{p}, H^{1,p}$)-multiplier, if for each $f \in L^{p}(\mathbb{T}, X)$
there exists $u \in H^{1,p}(\mathbb{T}, Y)$ such that $\hat{u}(k) = M_{k}\hat{f}(k)\ \ \text{ for all}\ \ k \in \mathbb{Z}.$
\end{Definition}

\begin{Lemma} 
Let  $1 \leq p < \infty$ and $(M_{k})_{k \in \mathbb{Z}} \subset  \mathbf{B}(X)\  \ (\mathbf{B}(X)$ is the set of all bounded linear operators from $X$ to $X$). Then the following assertions are equivalent:\\
(i) $(M_{k})_{k \in \mathbb{Z}}$  is an ($L^{p}, H^{1,p}$)-multiplier.\\
(ii) $(ikM_{k})_{k \in \mathbb{Z}}$ is an ($L^{p}, L^{p}$)-multiplier.
\end{Lemma}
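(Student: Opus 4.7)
The plan is to prove the two implications by direct unpacking of the definitions, using Lemma 3.3 (Arendt--Bu) to pass between $H^{1,p}$-functions and indefinite integrals, and using Remark \ref{r1} to compute the Fourier coefficients of such indefinite integrals.

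For the implication (i) $\Rightarrow$ (ii), the argument is almost immediate. Fix $f \in L^p(\mathbb{T};X)$. By assumption there exists $u \in H^{1,p}(\mathbb{T};X)$ with $\hat{u}(k)=M_k\hat{f}(k)$ for every $k \in \mathbb{Z}$. By definition of $H^{1,p}(\mathbb{T};X)$ there exists $v \in L^p(\mathbb{T};X)$ with $\hat{v}(k)=ik\,\hat{u}(k)=ikM_k\hat{f}(k)$, which is exactly what is required for $(ikM_k)_{k\in\mathbb{Z}}$ to be an $(L^p,L^p)$-multiplier.

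The content is in (ii) $\Rightarrow$ (i), where the issue is the zero mode. Given $f \in L^p(\mathbb{T};X)$, the hypothesis produces $v \in L^p(\mathbb{T};X)$ with $\hat{v}(k)=ikM_k\hat{f}(k)$. In particular $\hat{v}(0)=0$, so by Lemma 3.3 the function $w(t):=\int_0^t v(s)\,ds$ lies in $L^p(\mathbb{T};X)$ (it is in fact continuous) and satisfies $\hat{v}(k)=ik\hat{w}(k)$. Using Remark \ref{r1} together with $\hat{v}(0)=0$, one sees that for $k\neq 0$
\[
\hat{w}(k)=-\tfrac{i}{k}\hat{v}(k)=-\tfrac{i}{k}\cdot ikM_k\hat{f}(k)=M_k\hat{f}(k).
\]
So $w$ already has the right Fourier coefficients except possibly at $k=0$. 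To fix this, set $u(t):=w(t)+c$ with the constant $c:=M_0\hat{f}(0)-\hat{w}(0)\in X$. Then $\hat{u}(0)=M_0\hat{f}(0)$ while $\hat{u}(k)=\hat{w}(k)=M_k\hat{f}(k)$ for $k\neq 0$. Since $\hat{v}(k)=ik\hat{u}(k)$ for every $k$ (trivially at $k=0$), we have $u\in H^{1,p}(\mathbb{T};X)$, which shows that $(M_k)_{k\in\mathbb{Z}}$ is an $(L^p,H^{1,p})$-multiplier.

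The only mild obstacle is the zero-mode bookkeeping: one must observe that the hypothesis in (ii) automatically forces $\hat{v}(0)=0$ (so Lemma 3.3 applies) and then correct $\hat{u}(0)$ by an additive constant to recover the prescribed value $M_0\hat{f}(0)$. Once this is handled, the rest is a direct Fourier-coefficient computation via Remark \ref{r1}.
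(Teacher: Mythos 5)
Your proof is correct. Note that the paper itself states this lemma without proof (it is essentially Lemma 2.2 of Lizama's paper, reference [11]), so there is no in-paper argument to compare against; your treatment is the standard one. Both implications check out: (i) $\Rightarrow$ (ii) is indeed immediate from the definition of $H^{1,p}(\mathbb{T};X)$, and in (ii) $\Rightarrow$ (i) you correctly identify the only delicate point, namely that $\hat{v}(0)=ik M_k\hat{f}(k)\big|_{k=0}=0$ guarantees that $w(t)=\int_0^t v(s)\,ds$ is $2\pi$-periodic (so Lemma 3.3 applies), and that the zero Fourier mode must then be adjusted by the additive constant $M_0\hat{f}(0)-\hat{w}(0)$ to obtain $\hat{u}(k)=M_k\hat{f}(k)$ for all $k$, including $k=0$.
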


\begin{Proposition} Let $A$ be a closed linear operator defined on an UMD space $X$. Suppose that $\sigma_{\mathbb{Z}}(\Delta) = \phi$ .Then the following assertions are equivalent :\\
(i) $\{ik(ikI - A - \sum_{j=1}^{n}B_{j,k})^{-1}\}_{k \in \mathbb{Z}}$ is an $L^{p}$-multiplier for $1 < p < \infty$\\
(ii) $\{ik(ikI - A - \sum_{j=1}^{n}B_{j,k})^{-1}\}_{k \in \mathbb{Z}}$ is $R$-bounded.
\end{Proposition}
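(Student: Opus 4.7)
The plan is to invoke the operator-valued Marcinkiewicz multiplier theorem stated above, applied to the sequence $M_k := ik\Delta_k^{-1}$. The implication (i) $\Rightarrow$ (ii) is immediate from the proposition already recalled in Section~2, which asserts that any $L^p$-multiplier sequence is automatically $R$-bounded. For (ii) $\Rightarrow$ (i), since the first Marcinkiewicz hypothesis is exactly (ii), the only remaining task is to verify the second: that $\{k(M_{k+1}-M_k)\}_{k\in\mathbb{Z}}$ is $R$-bounded.

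To this end I would first carry out the algebraic identity
\[
k(M_{k+1}-M_k)=ik(k+1)\bigl(\Delta_{k+1}^{-1}-\Delta_k^{-1}\bigr)+M_k,
\]
obtained by adding and subtracting $ik(k+1)\Delta_k^{-1}$. Then I would apply the resolvent identity
\[
\Delta_{k+1}^{-1}-\Delta_k^{-1}=\Delta_{k+1}^{-1}(\Delta_k-\Delta_{k+1})\Delta_k^{-1},
\]
combined with the explicit computation
\[
\Delta_k-\Delta_{k+1}=-iI+\sum_{j=1}^n e^{-ikr_j}(e^{-ir_j}-1)B.
\]
Carefully pairing the scalar factors $i(k+1)$ and $ik$ with the corresponding resolvents, one arrives at
\[
k(M_{k+1}-M_k)=-M_{k+1}M_k-i\sum_{j=1}^n e^{-ikr_j}(e^{-ir_j}-1)\,M_{k+1}BM_k+M_k,
\]
which holds for all $k\neq 0,-1$; the two exceptional indices contribute only finitely many operators and can be absorbed into the final $R$-bound.

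It then remains to observe that each summand is $R$-bounded: (ii) gives the $R$-boundedness of $\{M_k\}$ and, by an obvious shift, of $\{M_{k+1}\}$; products of $R$-bounded families are $R$-bounded; the singleton $\{B\}$ is trivially $R$-bounded; and multiplication by the uniformly bounded scalar sequence $e^{-ikr_j}(e^{-ir_j}-1)$ preserves $R$-boundedness. The Marcinkiewicz theorem then delivers (i). The main technical obstacle is the bookkeeping in this resolvent expansion: one must track the polynomial factors $k(k+1)$ precisely enough to recognize $k(k+1)\Delta_{k+1}^{-1}\Delta_k^{-1}$ and $ik(k+1)\Delta_{k+1}^{-1}B\Delta_k^{-1}$ as $-M_{k+1}M_k$ and $-iM_{k+1}BM_k$ respectively. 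Once this rewriting is secured, the assembly into $R$-bounded pieces is automatic from the standard permanence properties, and the delay-dependent phases $e^{-ikr_j}$ cause no trouble because $B$ is bounded and the coefficients $e^{-ir_j}-1$ are constants.
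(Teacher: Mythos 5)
Your proposal is correct and follows essentially the same route as the paper: both directions reduce to the Marcinkiewicz theorem, with (ii)$\Rightarrow$(i) handled by a resolvent-identity expansion of $k(M_{k+1}-M_k)$ into products and sums of $R$-bounded families (the paper's version keeps the factors $\tfrac{k}{k+1}$ and the block $M_{k+1}(C_k-C_{k+1})M_k$ together, proving separately that $\{\sum_{j}B_{j,k}\}_{k}$ is $R$-bounded, while yours isolates $-M_{k+1}M_k$ and $M_{k+1}BM_k$ directly, but the two decompositions are algebraically equivalent). Your sign $(e^{-ir_j}-1)$ is in fact the correct one; the paper's $(1-e^{-ir_j})$ is a harmless typo.
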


\begin{proof}
By [1, Proposition 1.11] it follows that (i) implies (ii). Conversely, define $M_{k} = ik(C_{k} - A)^{-1}$, where $C_{k}=ikI- \sum_{j=1}^{n}B_{j,k}$,
By Theorem 2.6 is sufficient to prove that the set $(k(M_{k+1} - M_{k}))_{k \in \mathbb{Z}}$ is R-bounded. We claim first that the set $(\sum_{j=1}^{n}B_{j,k})_{k \in \mathbb{Z}}$ is R-bounded.\\
since given $x_{j} \in D(A)$ we have :
 \begin{align*}
\left\| \sum_{l=1}^{m} r_{l}(\sum_{j=1}^{n}B_{j,l}) x_{l}\right\|_{L^{p} (0,1; X)}^{p} &= \displaystyle{\int_{0}^{1}}\left\| \sum_{l=1}^{m} r_{l}(t) B(\sum_{j=1}^{n}e^{-ilr_{j}} x_{l})\right\|_{X}^{p}dt\\
&=\displaystyle{\int_{0}^{1}}\left\| B(\sum_{l=1}^{m} r_{l}(t) \sum_{j=1}^{n}e^{-ilr_{j}} x_{l})\right\|_{X}^{p}dt\\
&\leq ||B||^{p}\displaystyle{\int_{0}^{1}}\left\| \sum_{l=1}^{m} r_{l}(t) \sum_{j=1}^{n}e^{-ilr_{j}} x_{l}\right\|_{X}^{p}dt
\end{align*}
By (Lemma 1.7, [1]) we obtain that\\
$\left\| \sum_{l=1}^{m} r_{l}(\sum_{j=1}^{n}B_{j,l}) x_{l}\right\|_{L^{p} (0,1; X)}^{p} \leq 2n^{p}||B||^{p}\displaystyle{\int_{0}^{1}}\left\| \sum_{l=1}^{m} r_{l}(t) x_{l}\right\|_{X}^{p}dt$\\
We conclude that
$$\left\| \sum_{l=1}^{m} r_{l}(\sum_{j=1}^{n}B_{j,l}) x_{l}\right\|_{L^{p} (0,1; X)} \leq 2^{1/p}n||B||.$$
and the claim is proved.
Next. We note the following identities
\begin{align*}
k\left[ M_{k+1} - M_{k}\right]&= k\left[ i(k+1)(C_{k+1} - AD)^{-1} - ik(C_{k}-AD)^{-1}\right]\\
&=k(C_{k+1} - AD)^{-1} [i(k+1)(C_{k} - AD) - ik(C_{k+1} - AD)](C_{k} - AD)^{-1} \\
&=k(C_{k+1} - AD)^{-1} [ik(C_{k} - C_{k+1}) + i(C_{k} - A)](C_{k} - AD)^{-1} \\
&=k(C_{k+1} - AD)^{-1} [ik(C_{k} - C_{k+1})(C_{k} - AD)^{-1}+iI]\\
&=\frac{-ik}{k+1}M_{k+1}(C_{k} - C_{k+1})M_{k} + \frac{k}{k+1}M_{k+1}.
 \end{align*}
We have
$$C_{k} - C_{k+1} =-iI+\sum_{j=1}^{n}Be^{-ikr_{j}}(1-e^{-ir_{j}}).$$
Since products and sums of R-bounded sequences is R-bounded \cite[Remark 2.2]{11}. Then $\{k(M_{k+1} - M_{k})\}_{k \in \mathbb{Z}}$ is R-bounded and  by theorem 2.6, $\{M_{k}\}_{k \in \mathbb{Z}}$ is an $L^{p}$-multiplier.
\end{proof}

\begin{Theorem}\label{31}
Let $X$ be a Banach space. Suppose that for every $f \in L^{p}(\mathbb{T}; X)$ there exists a unique strong solution of Eq. (\ref{e}) for $1 \leq p <  \infty$. Then
\begin{enumerate}
\item for every $k \in \mathbb{Z}$ the operator $\Delta_{k}=( ikI-A-\sum_{j=1}^{n}B_{j,k})$ has bounded inverse
\item  $\left\{ ik\Delta^{-1}_{k}    \right\}_{k \in \mathbb{Z}}$ is $R$-bounded.
\end{enumerate}
\end{Theorem}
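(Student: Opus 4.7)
The plan is to extract both conclusions from the Fourier-coefficient identity that every strong solution must satisfy. If $x \in H^{1,p}(\mathbb{T};X)$ is the strong solution associated with $f \in L^{p}(\mathbb{T};X)$, then taking Fourier coefficients in (\ref{e}), using $\widehat{x'}(k) = ik\hat{x}(k)$ and $\widehat{x(\cdot-r_{j})}(k) = e^{-ikr_{j}}\hat{x}(k)$, and commuting the closed operator $A$ with the Bochner integral (legitimate by Hille's theorem, since $Ax = x' - \sum_{j=1}^{n}Bx(\cdot-r_{j}) - f$ lies in $L^{p}(\mathbb{T};X)$), one obtains
\begin{equation*}
\Delta_{k}\hat{x}(k) = \hat{f}(k), \qquad \hat{x}(k) \in D(A), \qquad k \in \mathbb{Z}.
\end{equation*}
This identity is the engine driving both assertions.

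For assertion (1), I would fix $k \in \mathbb{Z}$ and probe the hypothesis with exponentials. To get surjectivity of $\Delta_{k} : D(A) \to X$, given any $y \in X$ I set $f(t) = e^{ikt}y \in L^{p}(\mathbb{T};X)$ and note that the associated strong solution $x$ supplies $\hat{x}(k) \in D(A)$ with $\Delta_{k}\hat{x}(k) = \hat{f}(k) = y$. For injectivity, given $y \in D(A)$ with $\Delta_{k}y = 0$, the function $x(t) := e^{ikt}y$ is readily checked to be a strong solution for $f \equiv 0$, and uniqueness applied to $f \equiv 0$ forces $x \equiv 0$, so $y = 0$. Since $A$ is closed and $B$ bounded, $\Delta_{k}$ is a closed operator from $D(A)$ to $X$, and its bijectivity together with the closed graph theorem yields $\Delta_{k}^{-1} \in B(X)$.

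For assertion (2), I plan to recast the existence/uniqueness hypothesis in multiplier language. By (1) the identity above rewrites as $\hat{x}(k) = \Delta_{k}^{-1}\hat{f}(k)$ for all $k \in \mathbb{Z}$, so the solution map associates to each $f \in L^{p}(\mathbb{T};X)$ an element $x \in H^{1,p}(\mathbb{T};X)$ whose Fourier coefficients are exactly $\Delta_{k}^{-1}\hat{f}(k)$. By Definition 3.4 this says that $\{\Delta_{k}^{-1}\}_{k \in \mathbb{Z}}$ is an $(L^{p}, H^{1,p})$-multiplier; Lemma 3.5 upgrades this to the statement that $\{ik\Delta_{k}^{-1}\}_{k \in \mathbb{Z}}$ is an $(L^{p}, L^{p})$-multiplier, and Proposition 2.5 finally delivers its R-boundedness.

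I expect the only delicate point to be the verification in part (1) that the trial function $e^{ikt}y$ is a bona fide strong solution when $y \in \ker \Delta_{k}$---this needs $y \in D(A)$, which is automatic since $\Delta_{k}$ is defined on $D(A)$---together with the Hille-theorem bookkeeping that places $\hat{x}(k)$ in $D(A)$. Once the Fourier-coefficient identity is in hand, everything else reduces to directly invoking the preliminary machinery collected in Sections 2 and 3.
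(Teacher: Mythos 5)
Your proposal follows essentially the same route as the paper: surjectivity of $\Delta_{k}$ by testing with $f(t)=e^{ikt}y$, injectivity by showing $e^{ikt}y$ solves the homogeneous equation for $y\in\ker\Delta_{k}$ and invoking uniqueness, and R-boundedness by reading the solution map as the multiplier $\{ik\Delta_{k}^{-1}\}_{k\in\mathbb{Z}}$ and applying Proposition 2.5. Your additions (the closed graph theorem to pass from bijectivity to bounded invertibility, and the explicit appeal to Lemma 3.5) only make explicit steps the paper leaves implicit.
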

Before to give the proof of Theorem (\ref{31}), we need the following Lemma.

\begin{Lemma}\label{33}
if $( ikI-A-\sum_{j=1}^{n}B_{j,k}(x)) = 0$ for all $k \in \mathbb{Z}$, then $u(t) = e^{ikt}x$ is a $2\pi$-periodic strong $L^{p}$-solution of the following equation (\ref{e}) 
corresponing to the function $f = 0$.
\end{Lemma}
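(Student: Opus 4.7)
The plan is to read the hypothesis as saying that for a fixed $k \in \mathbb{Z}$ there is an $x \in D(A)$ with $\Delta_k x = 0$, i.e.\ $ikx = Ax + \sum_{j=1}^{n} e^{-ikr_j} B x$, and then verify directly that the exponential ansatz $u(t) = e^{ikt}x$ satisfies every clause of the definition of a $2\pi$-periodic strong $L^p$-solution of \eqref{e} with $f=0$.

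First I would check the regularity and boundary conditions. Since $k \in \mathbb{Z}$, the function $u(t)=e^{ikt}x$ is $2\pi$-periodic and $u(0)=x=u(2\pi)$. Its Fourier coefficients are $\hat u(k)=x$ and $\hat u(m)=0$ for $m\neq k$, so the candidate derivative $v(t):=ike^{ikt}x$ lies in $L^p(\mathbb{T};X)$ and satisfies $\hat v(m)=im\hat u(m)$ for all $m\in\mathbb{Z}$; by Lemma 3.4 this places $u$ in $H^{1,p}(\mathbb{T};X)$. The domain condition $u(t)\in D(A)$ is automatic because $u(t)$ is a scalar multiple of $x\in D(A)$.

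Next I would verify the differential equation pointwise. Since $A$ is closed and commutes with multiplication by the scalar $e^{ikt}$, and since $B$ is bounded, I compute
\begin{align*}
Au(t)+\sum_{j=1}^{n} B u(t-r_j) &= e^{ikt}Ax + \sum_{j=1}^{n} e^{ik(t-r_j)} B x\\
 &= e^{ikt}\Bigl(Ax + \sum_{j=1}^{n} e^{-ikr_j} B x\Bigr)\\
 &= e^{ikt}(ikx) = \tfrac{d}{dt}u(t),
\end{align*}
using the hypothesis $\Delta_k x = 0$ in the penultimate equality. This shows that $u$ solves \eqref{e} with $f\equiv 0$.

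There is no real obstacle here; the statement is a direct verification. The only subtle point is the implicit reading of the hypothesis: the expression $\Delta_k x = 0$ already requires $x \in D(A)$ (which makes $Bx$ well defined via $D(A)\subset D(B)$), and the conclusion holds for the specific $k$ for which the kernel is nontrivial, rather than literally for all $k\in\mathbb{Z}$ simultaneously as the printed statement suggests.
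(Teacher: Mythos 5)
Your proof is correct and follows essentially the same route as the paper: differentiate $u(t)=e^{ikt}x$, substitute the kernel condition $ikx = Ax + \sum_{j=1}^{n}e^{-ikr_j}Bx$, and identify $e^{ikt}e^{-ikr_j}Bx$ with $Bu(t-r_j)$. You are somewhat more thorough than the printed proof, which performs only this computation and omits the checks that $u\in H^{1,p}(\mathbb{T};X)$, that $u(t)\in D(A)$, and that $u(0)=u(2\pi)$; your remark that the hypothesis concerns a single fixed $k$ (not all $k$ simultaneously, as the statement's wording suggests) is also a correct reading of what the paper intends.
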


\begin{proof}
$( ikI-A-\sum_{j=1}^{n}B_{j,k}(x)) = 0 \Rightarrow ikx = Ax + \sum_{j=1}^{n}B_{j,k}x$.\\ 
We have $u(t) = e^{ikt}x$ then\\
$\displaystyle{u'(t) = ike^{ikt}x = e^{ikt}(ikx)= e^{ikt}[Ax + \sum_{j=1}^{n}B_{j,k}x]=Au(t) + \sum_{j=1}^{n}Bu(t-r_{j}).}$\\
{\bf Proof of Theorem \ref{31}}
1) Let $k \in \mathbb{Z}$ and $y \in X$. Then for $f(t) = e^{ikt}y$ , there exists $x \in H^{1,p}(\mathbb{T}; X)$ such that:
$$\displaystyle\frac{d}{dt}x(t) = Ax(t) + \sum_{j=1}^{n}Bx(t-r_{j})+ f(t)$$
Taking Fourier transform, by Lemma 3.3 we have : 
$$\hat{x'}(k)=ik\hat{x}(k)=A\hat{x}(k)+ \sum_{j=1}^{n}B_{j,k}\hat{x}(k) +\hat{f}(k).$$
Then we obtain : $( ikI-A-\sum_{j=1}^{n}B_{j,k})\hat{x}(k) = \hat{f}(k)=y \Rightarrow ( ikI-A-\sum_{j=1}^{n}B_{j,k})$ is surjective.\\
If $( ikI-A-\sum_{j=1}^{n}B_{j,k})u =0$, then by Lemma \ref{33} $x(t) = e^{ikt}u$ is a 2$\pi$-periodic strong $L^{p}$-solution of Eq. (\ref{e}) corresponing to the function $f = 0$
Hence $x(t) = 0$ and $u = 0$ then $(ikI-A-\sum_{j=1}^{n}B_{j,k})$ is injective.\\
2) Let $f \in L^{p}(\mathbb{T}, X)$. By hypothesis, there exists a unique $x \in H^{1;p}(\mathbb{T}, X)$
such that the Eq. (\ref{e}) is valid. Taking Fourier transforms, we deduce that
$\hat{x}(k) = (ikI-A-\sum_{j=1}^{n}B_{j,k})^{-1}\hat{f}(k)$ for all $k \in \mathbb{Z}$. Hence
$$ik\hat{x}(k) = ik(ikI-A-\sum_{j=1}^{n}B_{j,k})^{-1}\hat{f}(k)$$
On the other hand, since $x \in  H^{1;p}(\mathbb{T}, X)$, there exists $v \in L^{p}(\mathbb{T}, X)$ such that 
$\hat{v}(k)=ik\hat{x}(k) = ik(ikI-A-\sum_{j=1}^{n}B_{j,k})^{-1}\hat{f}(k)$ i.e $\{ ik \Delta^{-1}_{k} \}_{k \in \mathbb{Z}}$ is an
$L^{p}$-multiplier. Then $\{ik\Delta^{-1}_{k} \}_{k \in \mathbb{Z}}$ is R-bounded.
\end{proof}

\section{Existence of mild solutions of Eq. (\ref{e})}
It is well known that in many important applications the operator A is
the infinitesimal generator of $C_{0}$-semigroup $(T(t))_{t \geq 0}$ on the space X.
Let A be a generator of a $C_{0}$-semigroup $(T(t))_{t \geq 0}$.

\begin{Definition}
Assume that A generates a $C_{0}$-semigroup $(T(t))_{t \geq 0}$ on X. A
function $x$ is called a mild solution of Eq. (\ref{e}) if :
$$x(t) = T(t) \varphi + \int_{0}^{t} T(t - s)(\sum_{j=1}^{n}Bx(s-r_{j})+f(s))ds \ \text{for} \ 0 \leq t \leq 2\pi.$$
\end{Definition}

\begin{Remark} \cite[Remark 4.2]{14}\\
Let $(T(t))_{t \geq 0}$ be the $C_{0}$-semigroup generated by $A$. If $g: [0, a] \rightarrow X$ is a continuous function, then $\displaystyle{\int_{0}^{t}\int_{0}^{s}T(t - \xi)g(\xi)d\xi ds} \in D(A)$ and
$$A\int_{0}^{t}\int_{0}^{s}T(t - \xi)g(\xi)d\xi ds = \int_{0}^{t}( T(t - s) - I)g(s)ds \;\;\mbox{for all }\;\;0 \leq t \leq a.$$
\end{Remark}

\begin{Lemma}\label{lem} \cite{10}\\
Assume that A generates a $C_{0}$-semigroup $(T(t))_{t \geq 0}$ on X, if $x$ is a mild
solution of Eq. (\ref{e}) then
$$x(t) = \varphi + A\int_{0}^{t}x(s)ds+ \int_{0}^{t}(\sum_{j=1}^{n}Bx(s-r_{j})+f(s))ds \ \text{for} \ 0 \leq t \leq 2\pi.$$
\end{Lemma}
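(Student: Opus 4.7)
The plan is to start from the mild-solution identity, integrate it once in $t$, interchange the order of integration, and then apply the closed operator $A$ using the standard semigroup identities (in particular Remark 4.2) to recover the claimed formula.

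First, write $g(s):=\sum_{j=1}^{n}Bx(s-r_j)+f(s)$, so the mild-solution hypothesis becomes
\begin{equation*}
x(t)=T(t)\varphi+\int_{0}^{t}T(t-s)g(s)\,ds.
\end{equation*}
Integrate both sides from $0$ to $t$ in a dummy variable $\sigma$:
\begin{equation*}
\int_{0}^{t}x(\sigma)\,d\sigma=\int_{0}^{t}T(\sigma)\varphi\,d\sigma+\int_{0}^{t}\!\!\int_{0}^{\sigma}T(\sigma-s)g(s)\,ds\,d\sigma.
\end{equation*}
By Fubini's theorem, the double integral can be rewritten as $\int_{0}^{t}\int_{0}^{t-s}T(u)g(s)\,du\,ds$, or equivalently (by relabelling) in the form appearing in Remark 4.2.

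Now I apply $A$ to both sides. The first term on the right is handled by the classical semigroup identity $A\int_{0}^{t}T(\sigma)\varphi\,d\sigma=T(t)\varphi-\varphi$ (and, in particular, $\int_{0}^{t}T(\sigma)\varphi\,d\sigma\in D(A)$). The second term is exactly the object covered by Remark 4.2, which gives that it also lies in $D(A)$ and
\begin{equation*}
A\int_{0}^{t}\!\!\int_{0}^{\sigma}T(\sigma-s)g(s)\,ds\,d\sigma=\int_{0}^{t}\bigl(T(t-s)-I\bigr)g(s)\,ds.
\end{equation*}
Combining, and using closedness of $A$ to pull it inside the outer integral on the left,
\begin{equation*}
A\int_{0}^{t}x(\sigma)\,d\sigma=T(t)\varphi-\varphi+\int_{0}^{t}T(t-s)g(s)\,ds-\int_{0}^{t}g(s)\,ds.
\end{equation*}
Recognising the first and third terms on the right as $x(t)$ via the mild-solution identity gives
\begin{equation*}
A\int_{0}^{t}x(\sigma)\,d\sigma=x(t)-\varphi-\int_{0}^{t}g(s)\,ds,
\end{equation*}
which rearranges to the desired formula.

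The only delicate point is the verification that $\int_{0}^{t}x(\sigma)\,d\sigma\in D(A)$ and the justification of pulling $A$ inside; this rests entirely on the standard $C_{0}$-semigroup identity for $\int_{0}^{t}T(\sigma)\varphi\,d\sigma$ together with Remark 4.2 applied to the continuous integrand $g$. Once those two building blocks are in place, the proof is a straightforward Fubini-and-collect computation.
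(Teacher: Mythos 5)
The paper gives no proof of this lemma at all---it is stated with a bare citation to \cite{10}---so there is no internal argument to compare against; your proof is the standard one and is essentially correct. Integrating the variation-of-constants formula, applying $A$ to $\int_0^t T(\sigma)\varphi\,d\sigma$ via the classical semigroup identity and to the double integral via Remark 4.2, and then recognising $x(t)$ on the right-hand side is exactly how this lemma is proved in the literature. (Two small remarks on the mechanics: Fubini is not actually needed, since $\int_0^t\!\int_0^{\sigma}T(\sigma-s)g(s)\,ds\,d\sigma$ is already in the form covered by Remark 4.2, whose displayed kernel $T(t-\xi)$ is evidently a misprint for $T(s-\xi)$, as its right-hand side confirms; and no ``pulling $A$ inside the outer integral'' is required---once both summands are known to lie in $D(A)$, you only use linearity of $A$ on its domain.)

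The one genuine caveat is your parenthetical appeal to Remark 4.2 ``applied to the continuous integrand $g$'': here $g=\sum_{j=1}^{n}Bx(\cdot-r_j)+f$ is generally not continuous, because $f$ is only assumed to lie in $L^{p}(\mathbb{T};X)$. The identity does extend to integrable $g$, but that extension has to be argued: approximate $g$ in $L^{1}(0,t;X)$ by continuous functions $g_m$, note that both sides of the identity converge (the left-hand arguments converge in $X$ and the right-hand sides converge), and invoke the closedness of $A$ to pass to the limit. So closedness of $A$ is indeed used, but at this approximation step rather than where you placed it. With that patch the proof is complete.
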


\begin{Theorem}
Assume that A generates a $C_{0}$-semigroup $(T(t))_{t \geq 0}$ on X and
$f \in L^{p}(\mathbb{T}, X)$ for some $1 \leq p < \infty$, if $x$ is a mild solution of Eq. (\ref{e}). Then
$$(ikI - A -\sum_{j=1}^{n}B_{j, k}) \hat{x}(k) =  \hat{f}(k) \  \text{for all} \  k \in \mathbb{Z}.$$
\end{Theorem}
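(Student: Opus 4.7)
The plan is to start from the integral representation of a mild solution given by Lemma~\ref{lem}, take Fourier coefficients on both sides, and invoke Remark~\ref{r1} to handle the Fourier coefficient of each antiderivative that appears. The argument splits naturally into the two cases $k=0$ and $k\neq 0$, since Remark~\ref{r1} only applies to the latter.

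For $k=0$, I would evaluate the identity of Lemma~\ref{lem} at $t=2\pi$ and use the periodicity $x(2\pi)=x(0)=\varphi$ to cancel $\varphi$. By $2\pi$-periodicity, the change of variable $u=s-r_{j}$ gives $\int_0^{2\pi}x(s-r_{j})\,ds=2\pi\hat{x}(0)$ and $\int_0^{2\pi}f(s)\,ds=2\pi\hat{f}(0)$. Pulling $A$ through the integral by closedness then yields $A\hat{x}(0)+\sum_{j=1}^{n}B\hat{x}(0)+\hat{f}(0)=0$, which is exactly the claim at $k=0$ because $B_{j,0}=B$.

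For $k\neq 0$, I would take the $k$-th Fourier coefficient of the identity of Lemma~\ref{lem}. The constant term $\varphi$ contributes nothing. A change of variable together with $2\pi$-periodicity gives $\widehat{Bx(\cdot-r_{j})}(k)=B_{j,k}\hat{x}(k)$, while Remark~\ref{r1} applied to each of the three antiderivative terms produces $\tfrac{i}{k}$ times the difference of the zeroth and $k$-th Fourier coefficients of the respective integrand. Multiplying the resulting identity by $-ik$ and rearranging gives
\begin{equation*}
(ikI-A-\sum_{j=1}^{n}B_{j,k})\hat{x}(k)=\hat{f}(k)-\Big[A\hat{x}(0)+\sum_{j=1}^{n}B\hat{x}(0)+\hat{f}(0)\Big],
\end{equation*}
and the bracketed term vanishes by the $k=0$ identity already established.

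The main obstacle will be the careful handling of the unbounded operator $A$ appearing in the term $A\int_0^{\cdot}x(s)\,ds$: one must justify that $\int_0^{t}x(s)\,ds\in D(A)$ (implicit in the statement of Lemma~\ref{lem}) and that $A$ commutes with the Bochner integral $\tfrac{1}{2\pi}\int_0^{2\pi}e^{-ikt}(\,\cdot\,)\,dt$. Both facts follow from the closedness of $A$, once one knows that $A\int_0^{\cdot}x(s)\,ds$ lies in $L^{1}(\mathbb{T};X)$, which is encoded in the identity of Lemma~\ref{lem}. By contrast, the delay terms are routine since $B$ is bounded and can be pulled out of every integral freely.
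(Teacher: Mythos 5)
Your proposal is correct and follows essentially the same route as the paper: evaluate the identity of Lemma~\ref{lem} at $t=2\pi$ with $x(2\pi)=\varphi$ to get the $k=0$ case, then apply Remark~\ref{r1} to the antiderivative terms, multiply by $-ik$, and cancel the zeroth-order bracket. In fact your write-up is more complete than the paper's own, which breaks off after computing $\hat{v}(k)$ and $\hat{g}(k)$ (with stray undefined operators $G_{0},G_{k}$ apparently carried over from another manuscript) and never assembles the final identity, and your explicit attention to pulling the closed operator $A$ through the Bochner integrals fills a step the paper leaves implicit.
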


\begin{proof}
 Let $x$ be a mild solution of Eq. (\ref{e}). Then by Lemma \ref{lem}, we have \\
$$x(t) = \varphi + A\int_{0}^{t}x(s)ds+ \int_{0}^{t}(\sum_{j=1}^{n}Bx(s-r_{j})+f(s))ds$$
For $t = 2\pi$, we have
$$x(2\pi) = \varphi + A\int_{0}^{2\pi}x(s)ds+ \int_{0}^{2\pi}(\sum_{j=1}^{n}Bx(s-r_{j})+f(s))ds; $$
Since: $x(2\pi) =  \varphi$, then
\begin{align*}
&A\int_{0}^{2\pi}x(s)ds+ \int_{0}^{2\pi}(\sum_{j=1}^{n}Bx(s-r_{j})+f(s))ds=0\\
&\Rightarrow \frac{1}{2\pi}A\int_{0}^{2\pi}x(s)ds+ \frac{1}{2\pi}\int_{0}^{2\pi}(\sum_{j=1}^{n}Bx(s-r_{j})+f(s))ds=0\\
&\Rightarrow \frac{1}{2\pi}A\int_{0}^{2\pi}x(s)ds+ \frac{1}{2\pi}\int_{0}^{2\pi}\sum_{j=1}^{n}Bx(s-r_{j})ds+\frac{1}{2\pi}\int_{0}^{2\pi}f(s))ds=0\\
&\Rightarrow \frac{1}{2\pi}A\int_{0}^{2\pi}e^{-i0s}x(s)ds+ \frac{1}{2\pi}\int_{0}^{2\pi}e^{-i0s}\sum_{j=1}^{n}Bx(s-r_{j})ds+\frac{1}{2\pi}\int_{0}^{2\pi}e^{-i0s}f(s))ds=0\\
&\Rightarrow(0-A-\sum_{j=1}^{n}B_{j,0})\hat{x}(0)= \hat{f}(0),
\end{align*}
 which shows that the assertion holds for $k = 0$.\\
Now, define 
$$\displaystyle{v(t) = \int^{t}_{0}x(s)ds}$$ 
and  
$$g(t) = x(t) - \varphi - \int_{0}^{t}(\sum_{j=1}^{n}Bx(s-r_{j})+f(s))ds$$
by Remark \ref{r1} We have:
\begin{align*}
\hat{v}(k)&=\frac{i}{k}\hat{x}(0) - \frac{i}{k}\hat{x}(k) \\
A\hat{v}(k)&=\frac{i}{k}A\hat{x}(0) - \frac{i}{k}A\hat{x}(k)
\end{align*}
and
\begin{align*}
\hat{g}(k)&= \hat{x}(k)-[\frac{i}{k}G_{0}\hat{x}(0) - \frac{i}{k}G_{k}\hat{x}(k)]-[\frac{i}{k}\hat{f}(0) - \frac{i}{k}\hat{f}(k)]\\
&=\hat{x}(k)-\frac{i}{k}G_{0}\hat{x}(0) + \frac{i}{k}G_{k}\hat{x}(k)-\frac{i}{k}\hat{f}(0)+ \frac{i}{k}\hat{f}(k)
\end{align*}
\end{proof}

\begin{Corollary}
Assume that A generates a $C_{0}$-semigroup $(T(t))_{t \geq 0}$ on X
and let $f \in L^{p}(\mathbb{T}, X): 1 \leq p < \infty$ and $x$ be a mild solution of Eq. (\ref{e}). If
$(ikI - A -\sum_{j=1}^{n}B_{j, k})$ has a bounded inverse. Then $(ikI - A -\sum_{j=1}^{n}B_{j, k})$ 
is an $L^{p}$-multiplier.
\end{Corollary}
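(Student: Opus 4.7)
The statement of the corollary is almost certainly meant to assert that the family $\bigl\{(ikI - A - \sum_{j=1}^{n}B_{j,k})^{-1}\bigr\}_{k\in\mathbb{Z}}$ is an $L^{p}$-multiplier, since the unbounded operator $ikI - A - \sum_{j=1}^{n}B_{j,k}$ itself is not even defined on all of $X$. With that reading, the plan is to read off the conclusion directly from Theorem~4.4 and the Definition~2.4 of an $L^{p}$-multiplier.

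First I would fix an arbitrary $f \in L^{p}(\mathbb{T}, X)$ and, invoking the standing assumption of the corollary that for every such $f$ there exists a mild solution $x$, select one such $x$. Because $x$ is given by the constant-variation formula in Definition~4.1 on the compact interval $[0,2\pi]$ with continuous semigroup $T(\cdot)$ and $L^{p}$ data, $x$ is continuous (in fact $x\in C([0,2\pi];X)$) and so lies in $L^{p}(\mathbb{T}, X)$ after $2\pi$-periodic extension. This is what is needed to apply Definition~2.4.

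Next I would apply Theorem~4.4 to the pair $(x,f)$ to obtain
\begin{equation*}
\bigl(ikI - A - \sum_{j=1}^{n}B_{j,k}\bigr)\hat{x}(k) \;=\; \hat{f}(k)\qquad \text{for all } k\in\mathbb{Z}.
\end{equation*}
By the hypothesis that $\Delta_{k} = ikI - A - \sum_{j=1}^{n}B_{j,k}$ has a bounded inverse for every $k\in\mathbb{Z}$, I would rewrite this as $\hat{x}(k) = \Delta_{k}^{-1}\hat{f}(k)$. Setting $M_{k} := \Delta_{k}^{-1}$ and $u := x$, the pair $(f,u)$ witnesses exactly the condition in Definition~2.4 that $\{M_{k}\}_{k\in\mathbb{Z}}$ be an $L^{p}$-multiplier.

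There is essentially no analytic obstacle once Theorem~4.4 is available; the only point deserving care is the verification that $x\in L^{p}(\mathbb{T},X)$, which reduces to noting that the mild-solution formula produces a continuous function on $[0,2\pi]$ (using boundedness of $B$ and local boundedness of $T(\cdot)$ together with $f\in L^{p}$). The rest is a bookkeeping step matching the identity $\hat{x}(k) = \Delta_{k}^{-1}\hat{f}(k)$ to the definition of $L^{p}$-multiplier.
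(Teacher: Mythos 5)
Your proposal is correct and takes essentially the same route as the paper: invoke Theorem 4.4 to get $(ikI - A - \sum_{j=1}^{n}B_{j,k})\hat{x}(k) = \hat{f}(k)$, invert using the bounded-invertibility hypothesis, and match $\hat{x}(k) = \Delta_{k}^{-1}\hat{f}(k)$ against the definition of an $L^{p}$-multiplier (the paper's proof, like yours, in fact concludes that the \emph{inverse} family is the multiplier, confirming your reading of the misstated conclusion). Your extra verification that the mild solution lies in $L^{p}(\mathbb{T},X)$ is a detail the paper omits but does not change the approach.
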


\begin{proof}
Let $f \in L^{p}(\mathbb{T},X)$ then from Theorem (4.4) we have:\\
$\hat{x}(k) = (ikD_{k}-AD_{k} - G_{k})^{-1}\hat{f}(k)$ for all $f \in L^{p}(\mathbb{T}; X)$, then \\
$(ikI- A -  \sum_{j=1}^{n} B_{j,k})^{-1}$ is an $L^{p}$-multiplier.
\end{proof}

\section{Main Result} Our main result in this work is to establish that the converse of theorem (3.7) and corollary
(4.5) is true, provided X is an UMD space.

\begin{Theorem} (Fejer Theorem) : Let $f \in L^{p}(\mathbb{T}, X)$. Then 
$$f = \lim_{n \rightarrow +\infty} \sigma_{n}(f)$$
where $\sigma_{n}(f)= \frac{1}{n+1} \sum^{n}_{m=0} \sum^{m}_{k=-m} e_{k} \hat{f}(k)$, with $e_{k}(t) = e^{ikt}.$
\end{Theorem}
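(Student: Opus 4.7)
The plan is to realize $\sigma_n(f)$ as a convolution with the Fej\'er kernel and then to run the standard approximate-identity argument, checking that the vector-valued setting causes no real difficulty.

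First I would rewrite the Ces\`aro mean as
$$
\sigma_n(f)(t) \;=\; \frac{1}{n+1} \sum_{m=0}^{n} S_m(f)(t), \qquad S_m(f)(t) = \sum_{k=-m}^{m} \hat{f}(k) e^{ikt},
$$
and then use that $S_m(f) = D_m * f$ with $D_m$ the Dirichlet kernel to get $\sigma_n(f) = F_n * f$, where
$$
F_n(t) \;=\; \frac{1}{n+1}\sum_{m=0}^{n} D_m(t) \;=\; \frac{1}{n+1}\left(\frac{\sin((n+1)t/2)}{\sin(t/2)}\right)^{2}
$$
is the scalar Fej\'er kernel. The Bochner integral defining the convolution is legitimate because $F_n$ is scalar and $f\in L^p(\mathbb{T};X)$.

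Next I would record the three defining properties of the Fej\'er kernel: (a) $F_n\ge 0$; (b) $\frac{1}{2\pi}\int_{-\pi}^{\pi} F_n(t)\,dt = 1$; (c) for every $\delta>0$, $\int_{\delta\le|t|\le\pi} F_n(t)\,dt \to 0$ as $n\to\infty$. From (a) and (b), Young's inequality for convolution (valid with $X$-valued $f$ since the kernel is scalar) gives the uniform bound
$$
\|\sigma_n(f)\|_{L^p(\mathbb{T};X)} \;\le\; \|F_n\|_{L^1(\mathbb{T})}\,\|f\|_{L^p(\mathbb{T};X)} \;=\; \|f\|_{L^p(\mathbb{T};X)}.
$$

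Then I would establish convergence first on the dense subspace $C(\mathbb{T};X)\subset L^p(\mathbb{T};X)$. For $g\in C(\mathbb{T};X)$, uniform continuity on the compact group $\mathbb{T}$ gives, for every $\varepsilon>0$, a $\delta>0$ with $\|g(t-s)-g(t)\|_X<\varepsilon$ whenever $|s|<\delta$. Splitting the integral in $\sigma_n(g)(t)-g(t) = \frac{1}{2\pi}\int_{-\pi}^{\pi} F_n(s)\bigl(g(t-s)-g(t)\bigr)\,ds$ over $|s|<\delta$ and $\delta\le|s|\le\pi$, and using (b), (c), and the boundedness of $g$, gives $\|\sigma_n(g)-g\|_\infty\to 0$, hence also in $L^p(\mathbb{T};X)$.

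Finally, a standard $3\varepsilon$-argument closes the proof: given $f\in L^p(\mathbb{T};X)$ and $\varepsilon>0$, choose $g\in C(\mathbb{T};X)$ with $\|f-g\|_p<\varepsilon$, and write
$$
\|\sigma_n(f)-f\|_p \;\le\; \|\sigma_n(f-g)\|_p + \|\sigma_n(g)-g\|_p + \|g-f\|_p \;\le\; 2\varepsilon + \|\sigma_n(g)-g\|_p,
$$
where the first term used the Young bound above. Letting $n\to\infty$ finishes the proof. The only place where one might worry is in transferring the scalar Fej\'er estimates to the vector-valued case, but since $F_n$ is a nonnegative scalar kernel and the Bochner integral respects the triangle inequality, every step goes through verbatim; no UMD or $R$-boundedness input is needed here.
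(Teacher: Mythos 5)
Your proof is correct: it is the classical Fej\'er approximate-identity argument (write $\sigma_n(f)=F_n*f$, use positivity, normalization and concentration of the kernel, Young's inequality, density of $C(\mathbb{T};X)$, and a $3\varepsilon$ argument), and every step does transfer verbatim to the Bochner setting since the kernel is scalar and nonnegative. The paper itself states this theorem as a known classical fact and offers no proof at all, so there is nothing to compare against; your argument simply supplies, correctly, the standard proof the paper takes for granted, and you are right that no UMD or $R$-boundedness hypothesis is involved.
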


\begin{Theorem}
Let X be an UMD space and $A : D(A) \subset X \rightarrow X $ be a closed
linear operator. Then the following assertions are equivalent for $1 < p < \infty.$\\
1) for every $f \in L^{p}(\mathbb{T}, X)$ there exists a unique strong $L^{p}$-solution of Eq.(\ref{e}).\\
2) $\sigma_{\mathbb{Z}}(\Delta) = \phi$ and $\{ ik \Delta^{-1}_{k} \}_{k \in \mathbb{Z}}$ is R-bounded.
\end{Theorem}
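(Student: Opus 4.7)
The direction $(1) \Rightarrow (2)$ is essentially Theorem \ref{31}, since the condition $\sigma_{\mathbb{Z}}(\Delta) = \phi$ is precisely the statement that $\Delta_k$ is bijective for every $k \in \mathbb{Z}$. So the substance of the proof lies in the converse $(2) \Rightarrow (1)$, which I plan to carry out by combining the multiplier results of Section 3 with a Fej\'er-based regularity argument to upgrade a Fourier-coefficient solution into a genuine strong solution.

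Under hypothesis $(2)$, Proposition 3.6 promotes the $R$-boundedness of $\{ik\Delta_k^{-1}\}_{k \in \mathbb{Z}}$ to the assertion that this sequence is an $L^p$-multiplier, and Lemma 3.5 then yields that $\{\Delta_k^{-1}\}_{k\in\mathbb{Z}}$ is an $(L^p,H^{1,p})$-multiplier. Hence for any $f \in L^p(\mathbb{T},X)$ there exists $x \in H^{1,p}(\mathbb{T},X)$ with $\hat{x}(k) = \Delta_k^{-1}\hat{f}(k)$ for every $k \in \mathbb{Z}$; each such $\hat{x}(k)$ already lies in $D(A)$ because $\Delta_k^{-1}$ ranges into $D(\Delta_k) = D(A)$.

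To show that this candidate $x$ is actually a strong solution, I would choose $v \in L^p(\mathbb{T},X)$ with $\hat{v}(k) = ik\hat{x}(k)$ (available since $x \in H^{1,p}$), set $w(t) := \sum_{j=1}^n Bx(t-r_j) \in L^p(\mathbb{T},X)$ with $\hat{w}(k) = \sum_{j=1}^n B_{j,k}\hat{x}(k)$, and define $z := v - w - f \in L^p(\mathbb{T},X)$. The identity $\Delta_k \hat{x}(k) = \hat{f}(k)$ rearranges to $\hat{z}(k) = A\hat{x}(k)$ for every $k$. Each trigonometric partial sum $s_m(x)(t) := \sum_{k=-m}^m e^{ikt}\hat{x}(k)$ then takes values in $D(A)$ and satisfies $A s_m(x) = s_m(z)$, so the Ces\`aro--Fej\'er means obey $A\sigma_n(x) = \sigma_n(z)$. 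By Theorem 5.1 we have $\sigma_n(x) \to x$ and $\sigma_n(z) \to z$ in $L^p$, hence a subsequence converges pointwise a.e.; closedness of $A$ then forces $x(t) \in D(A)$ and $Ax(t) = z(t)$ for a.e.\ $t$. Substituting back gives $v(t) = Ax(t) + \sum_{j=1}^n Bx(t-r_j) + f(t)$, and Lemma 3.3 identifies $v$ with the derivative of $x$, so Eq.~(\ref{e}) holds a.e., with periodicity $x(0)=x(2\pi)$ automatic from $x$ being defined on $\mathbb{T}$. Uniqueness is immediate: the Fourier coefficients of any difference of two solutions are annihilated by the injective $\Delta_k$, hence vanish.

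The principal obstacle is the regularity step above: the multiplier construction only guarantees $\hat{x}(k) \in D(A)$ mode by mode, and one must bridge this to $x(t) \in D(A)$ for a.e.\ $t$ without any boundedness or semigroup assumption on $A$. Simultaneous Fej\'er approximation of $x$ and $z$, combined with the closedness of $A$, is exactly the device that performs this transfer; everything else is a bookkeeping consequence of the previously established multiplier machinery.
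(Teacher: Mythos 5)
Your proposal is correct and follows essentially the same route as the paper: Proposition 3.6 upgrades $R$-boundedness to the $L^p$-multiplier property, one obtains $x \in H^{1,p}(\mathbb{T},X)$ with $\hat{x}(k)=\Delta_k^{-1}\hat{f}(k)$, and a Fej\'er-mean argument combined with the closedness of $A$ transfers $\hat{x}(k)\in D(A)$ to $x(t)\in D(A)$ a.e.\ and yields the equation (the paper delegates this last step to the cited Lemma 3.1 of Arendt--Bu, which you in effect re-prove). You additionally spell out the uniqueness argument via injectivity of $\Delta_k$, which the paper asserts but does not write out, and your appeal to periodicity should formally go through the continuous representative furnished by Lemma 3.3, as the paper does.
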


\begin{proof}
$1\Rightarrow2)$ see Theorem 3.7.\\
$1\Leftarrow2)$  Let $f \in L^{p}(\mathbb{T}; X)$ . Define $\Delta_{k} = (ikI-A - \sum_{j=1}^{n} B_{j,k})$ ,\\
By Proposition 3.6,  the family $\left\{ik \Delta^{-1}_{k}\right\}_{k \in \mathbb{Z}}$ is an $L^{p}$-multiplier it is equivalent to\\
the family $\left\{ \Delta^{-1}_{k}\right\}_{k \in \mathbb{Z}}$ is an $L^{p}$-multiplier that maps $L^{p}(\mathbb{T}; X)$ into $H^{1,p}(\mathbb{T}; X)$,\\
 namely there exists $x \in H^{1,p}(\mathbb{T}, X)$ such that
\begin{equation}\label{11}
\hat{x}(k)=\Delta^{-1}_{k} \hat{f}(k)= (ikI-A - \sum_{j=1}^{n} B_{j,k})^{-1} \hat{f}(k)
\end{equation}
In particular, $x \in L^{p}(\mathbb{T}; X)$ and there exists $v \in L^{p}(\mathbb{T}; X)$ such\\
 that $\hat{v}(k) = ik \hat{x}(k)$
\begin{equation}\label{12}
\widehat{x'}(k):=  \hat{v}(k) = ik \hat{x}(k)
\end{equation}
By Theorem 5.1 we have for $j \in \{1...n\}$
$$x(t - r_{j}) = \lim_{ l \rightarrow +\infty} \frac{1}{l+1} \sum^{l}_{m=0} \sum^{m}_{k=-m} e^{ikt} e^{-ikr_{j}}\hat{x}(k)$$
Then, since B is bounded linear\\
$$\sum_{j=1}^{n} Bx(t-r_{j}) = \lim_{l \rightarrow +\infty} \frac{1}{l+1} \sum^{l}_{m=0} \sum^{m}_{k=- m}e^{ikt} (\sum_{j=1}^{n} B_{j,k} \hat{x}(k))$$
By (\ref{11}) and (\ref{12}) we have: 
$$\widehat{x'}(k) = ik \hat{x}(k) = A \hat{x}(k) + \sum_{j=1}^{n} B_{j,k} \hat{x}(k) + \hat{f}(k), \  \text{for all} \ k \in \mathbb{Z}$$
Then using that $A$ and B  are  closed we conclude that $x(t) \in D(A)$ [[1], Lemma 3.1] and from the uniqueness theorem of Fourier coefficients that  
$$x'(t) = Ax(t) + \sum_{j=1}^{n} Bx(t-r_{j})+f(t).$$
We have $x \in H^{1,p}(\mathbb{T},X)$ then  by lemma 3.3, $x(0) = x(2 \pi)$, then the Eq. (1) has a unique 2$\pi$-periodic strong $L^{p}$-solution. 
\end{proof}

\begin{Theorem} Let $ 1 \leq p < \infty $. Assume that $A$ generates a $C_{0}$-semigroup $(T(t))_{t \geq 0}$ on $X$. If $\sigma_{Z}(\Delta)=\emptyset$ and  $(ikI-A - \sum_{j=1}^{n} B_{j,k})^{-1}$ is an $L^{p}$-multiplier Then there exists a unique mild solution periodic of Eq. (1).
\end{Theorem}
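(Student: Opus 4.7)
The plan is to use the multiplier hypothesis to produce a candidate function and then identify it with a mild solution via Fourier coefficients. Given $f \in L^p(\mathbb{T};X)$, the assumption that $\{\Delta_k^{-1}\}_{k\in\mathbb{Z}}$ is an $L^p$-multiplier yields $x \in L^p(\mathbb{T};X)$ satisfying $\hat{x}(k) = \Delta_k^{-1}\hat{f}(k)$ for every $k \in \mathbb{Z}$. In particular each $\hat{x}(k) \in D(A)$ and the basic identity
$$ik\,\hat{x}(k) = A\hat{x}(k) + \sum_{j=1}^{n} B_{j,k}\hat{x}(k) + \hat{f}(k), \qquad k \in \mathbb{Z},$$
holds; at $k=0$ this collapses to the compatibility condition $A\hat{x}(0) + nB\hat{x}(0) + \hat{f}(0) = 0$, which will produce periodicity at the end.

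Next, I would set $h_x(t) := \sum_{j=1}^n Bx(t-r_j) + f(t) \in L^p(\mathbb{T};X)$, whose Fourier coefficients are $\hat{h_x}(k) = \sum_j B_{j,k}\hat{x}(k) + \hat{f}(k)$, and compare $x$ with the auxiliary function
$$\widetilde{x}(t) := \varphi + A\int_0^t x(s)\,ds + \int_0^t h_x(s)\,ds$$
by computing $\widehat{x-\widetilde x}(k)$. Using Remark 2.7 on the two integrals and the closedness of $A$ to pull $A$ through the Fourier coefficient (legitimate since each $\hat{x}(k)\in D(A)$), the displayed identity and its $k=0$ counterpart force $\widehat{x-\widetilde x}(k)=0$ for every $k\neq 0$; the constant $\varphi$ is then chosen so that $\widehat{x-\widetilde x}(0)=0$. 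Uniqueness of Fourier coefficients gives $x=\widetilde x$ almost everywhere. A standard variation-of-parameters argument (Remark 4.2 plus closedness of $A$) rewrites this integrated identity as $x(t) = T(t)\varphi + \int_0^t T(t-s)h_x(s)\,ds$, so $x$ admits a continuous representative which is a mild solution. Evaluating at $t=0$ and $t=2\pi$ and invoking the $k=0$ compatibility condition delivers $x(0)=x(2\pi)=\varphi$.

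For uniqueness, if $x_1,x_2$ are two periodic mild solutions for the same $f$, then $z := x_1-x_2$ is a periodic mild solution for $f=0$. Theorem 4.4 gives $\Delta_k\hat{z}(k)=0$ for every $k\in\mathbb{Z}$; the hypothesis $\sigma_{\mathbb{Z}}(\Delta)=\emptyset$ makes each $\Delta_k$ injective, so $\hat{z}(k)=0$ for every $k$, and therefore $z\equiv 0$.

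The hard part will be the rigorous handling of the unbounded operator $A$ inside the Fourier computations: one needs $t\mapsto A\int_0^t x(s)\,ds$ to make sense as a locally integrable function, and one needs its $k$-th Fourier coefficient to equal $A\cdot\widehat{\int_0^\cdot x(s)\,ds}(k)$. Both require closedness of $A$, the inclusion $\hat{x}(k)\in D(A)$ guaranteed by the displayed identity, and a Fejér-type approximation modelled on the argument used in the proof of Theorem 4.2. A related subtlety is passing from the integrated formulation to the genuine semigroup formula for a mild solution when $f$ is only $L^p$-regular; the $C_0$-semigroup hypothesis is precisely what makes this step go through.
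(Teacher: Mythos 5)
Your plan is a genuinely different route from the paper's. The paper never forms the integrated identity $x(t)=\varphi+A\int_0^t x(s)\,ds+\int_0^t h_x(s)\,ds$ for the limit function $x$. Instead it takes the Fej\'er means $f_l$ of $f$ and the trigonometric polynomials $x_l(t)=\frac{1}{l+1}\sum_{m=0}^{l}\sum_{k=-m}^{m}e^{ikt}\Delta_k^{-1}\hat f(k)$, observes that each $x_l$ is a classical --- hence mild --- solution of the equation with datum $f_l$ (all its Fourier coefficients lie in $D(A)$, so $A$ is only ever applied to finite sums), and then passes to the limit in the variation-of-parameters formula $x_l(t)=T(t)\varphi_l+\int_0^t T(t-s)\bigl(\sum_j Bx_l(s-r_j)+f_l(s)\bigr)ds$. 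That structure sidesteps exactly what you call ``the hard part'': the paper never needs $\int_0^t x(s)\,ds\in D(A)$ for the limit function. What your route buys in exchange is real: you actually prove uniqueness (difference of two periodic mild solutions, Theorem 4.4, injectivity of every $\Delta_k$), which the paper's proof omits entirely even though the statement asserts it; and you make explicit that periodicity comes from the $k=0$ relation $A\hat x(0)+nB\hat x(0)+\hat f(0)=0$. Your Fourier computation showing $\widehat{x-\widetilde x}(k)=0$ is correct as far as it goes.

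The deferred step is, however, a genuine gap and not a routine one. If you run the Fej\'er approximation you propose, you get
\begin{equation*}
A\int_0^t\sigma_l(x)(s)\,ds=\sigma_l(x)(t)-\sigma_l(x)(0)-\int_0^t\sigma_l(h_x)(s)\,ds,
\end{equation*}
and closedness of $A$ only lets you pass to the limit if $\sigma_l(x)(0)$ converges --- which is not automatic for an $L^p$ function at the specific point $0$ (Fej\'er means converge at Lebesgue points and in $L^p$, not at a prescribed point). It is worth noting that this is the very same quantity on which the paper's own proof is thin: there $\varphi_l=x_l(0)=\sigma_l(x)(0)$ is asserted to converge ``from'' the $t=2\pi$ identity with no justification (one would want, e.g., $I-T(2\pi)$ injective with bounded inverse on its range). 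So your proposal inherits the paper's gap rather than introducing a new one, but as written neither argument closes it; you would need either an additional spectral assumption of that kind or an argument that lets you anchor the integral at a Lebesgue point of $x$ instead of at $0$ before applying $A$.
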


\begin{proof}
 For $f \in L^{p}(\mathbb{T}; X)$ we define
$$f_{l}(t) =  \frac{1}{l+1} \sum^{l}_{m=0} \sum^{m}_{k=-m} e^{ikt} \hat{f}(k)$$
By the Fej\'er Theorem we can assert that $ f_{l} \rightarrow f $  as $ l \rightarrow \infty $ for the norm in $L^{p}(\mathbb{T}; X)$.
We have $ (ikI-A - \sum_{j=1}^{n} B_{j,k})^{-1}$ is an $L^{p}$-multiplier then there exists $x \in L^{p}(\mathbb{T}; X)$ such that
$\hat{x}(k) =  (ikI-A - \sum_{j=1}^{n} B_{j,k})^{-1}\hat{f}(k)$\\
put 
$$x_{l}(t) =  \frac{1}{l+1}\sum^{l}_{m=0}\sum^{m}_{k=-m}e^{ikt}(ikI-A - \sum_{j=1}^{n} B_{j,k})^{-1} \hat{f}(k)$$
Using again the Fej\'er Theorem we obtain that $x_{n}(t) \rightarrow  x(t)$ (as $n \rightarrow \infty)$ and $x_{n}(t)$ is strong $L^{p}$-solution of Eq. (1) and $x_{n}(t)$ verified
\begin{equation}\label{52}
x_{l}(t) = T(t)\varphi_{l}+ \int^{t}_{0}T(t-s)(\sum_{j=1}^{n} Bx_{l}(s-r_{j}) + f_{l}(s))ds
\end{equation}
With  $t = 2\pi$ we obtain 
$$x_{l}(2 \pi) = T(2 \pi)\varphi_{l}+ \int^{2 \pi}_{0}T(2 \pi-s)(\sum_{j=1}^{n} Bx_{l}(s-r_{j}) + f_{l}(s))ds.$$
from which we infer that the sequence $( \varphi_{l} )_{n}$ is convergent to some element $\varphi$ as $l \rightarrow \infty$( $\varphi_{l} = x_{l}(0) =x_{l}(2 \pi)$).
Moreover, $\varphi$ satisfies the condition 
\begin{equation}
\varphi = T(2\pi) \varphi +  \int^{2 \pi}_{0}T(2 \pi-s)(\sum_{j=1}^{n} Bx(s-r_{j}) + f(s))ds.
\end{equation}
Taking the limit as $l$ goes to infinity in (\ref{52}), we can write
$$x(t) = T(t)\varphi+ \int^{t}_{0}T(t-s)(\sum_{j=1}^{n} Bx(s-r_{j}) + f(s))ds:= g(t)$$
$g(2\pi) = T(2\pi)y + \int^{2 \pi}_{0}T(2 \pi-s)(\sum_{j=1}^{n} Bx(s-r_{j}) + f(s))ds  \overbrace{=}^{(5)} \varphi = g(0)$\\
Then $x(2\pi) = \varphi \Rightarrow x(2\pi) = x(0)$, we conclude that $ x $ is a $2\pi$- periodic mild solution of Eq. (1).
\end{proof}

{\bf Acknowledgements.} This is a text of acknowledgements.

{\bf Received: Month xx, 20xx}

\end{document}